\numberwithin{equation}{section}
\newtheorem{thm}{Theorem}[section]
\newtheorem{lem}[thm]{Lemma}
\newcommand{\eps}{\varepsilon}
\newcommand{\EEE}{{\mathbb{E}}}
\newcommand{\RRR}{{\mathbb{R}}}
\newcommand{\cC}{{\mathcal{C}}}
\newcommand{\cF}{{\mathcal{F}}}
\newcommand{\cL}{{\mathcal{L}}}
\newcommand{\cN}{{\mathcal{N}}}
\newcommand{\grg}{{\gamma}}
\newcommand{\grd}{{\delta}}
\newcommand{\grh}{{\eta}}
\newcommand{\grth}{{\theta}}
\newcommand{\grm}{{\mu}}
\newcommand{\grn}{{\nu}}
\newcommand{\grr}{{\rho}}
\newcommand{\grs}{{\sigma}}
\newcommand{\grt}{{\tau}}
\newcommand{\grD}{{\Delta}}
\newcommand{\1}{{\mathbbm{1}}}
\newcommand{\dd}{{\mathrm{d}}}
\newcommand{\del}{{\partial}}
\DeclareMathOperator{\sgn}{sgn}
\title{Exit time asymptotics for dynamical systems with fast random switching near an unstable equilibrium  }
\author{Yuri Bakhtin}
\author{Alexisz Ga\'al}
\affil{Courant Institute of Mathematical Sciences \footnote{251 Mercer St, New York, NY 10012}}
\date{\today}
\begin{document}
\maketitle

\abstract{We consider the exit problem for a one-dimensional system with random switching near an unstable equilibrium point of the averaged drift. In the infinite switching rate limit, we show that the exit time satisfies a limit theorem
with a logarithmic deterministic term and a random correction converging in distribution. Thus this setting is in the universality class of the unstable equilibrium exit under small white-noise perturbations. }

\textbf{Keywords:} unstable critical point, exit problem, small noise, fast switching, piecewise deterministic Markov process

\textbf{MSC:}  60H10; 60J60; 93C30

\section{Introduction and Main result}

In \cite{Kifer}, \cite{Eizenberg:MR749377}, \cite{Day:MR1376805},
\cite{Bakhtin-SPA:MR2411523}, \cite{Bak2010}, \cite{Bak2011}, \cite{AB2011},
\cite{Bakhtin-Correll:MR2983392}, \cite{Bakhtin:MR2935120}, \cite{PPG:doi:10.1142/S0219493719500229},
\cite{BPG-AAP}, \cite{BPG:equidistr},
exit problems for processes in neighborhoods of unstable equilibria under the influence of white noise
of small magnitude $\eps$ were studied. Among other results, it was obtained (under various additional sets of technical assumptions in \cite{Day:MR1376805}, \cite{Bakhtin-SPA:MR2411523}, \cite{Bak2011},    \cite{AB2011})
that if the origin is a hyperbolic critical point
of a smooth vector field $b$, with simple leading eigenvalue $\lambda>0$ of the linearization, then for any initial condition belonging to the stable manifold of the origin, the time $\tau_\eps$
when the solution of the It\^o SDE 
\begin{equation}
\label{eq:sde}
\dd X_\eps(t) = b(X_\eps(t))\dd t +\eps \sigma(X_\eps(t))\dd W(t),
\end{equation}
with nondegenerate smooth diffusion matrix $\sigma$ 
exits a small neighborhood $U$ of the origin satisfies the following limit theorem:
there are numbers $D_{\pm1}$ such that
\begin{equation}
\left(X_\eps(\tau_\eps),\ \tau_\eps -\frac{1}{\lambda}\ln\frac{1}{\eps}\right) \stackrel{d}{\longrightarrow} \left(q_{B},\  D_{B}+\frac{1}{\lambda}\ln|N|\right),\quad \eps\to 0,
\label{eq:limit for diffusion}
\end{equation}
where $q_{\pm1}$ are the points where the invariant curve associated with the eigenvalue $\lambda$
intersects the boundary of $U$, $B$ and $N$ are independent random variables, $B$ is $1/2$-Bernoulli, $N$ is standard Gaussian. 

It is clear that such an asymptotic result for solutions of an SDE must describe the asymptotic behavior for a whole class of systems that are well approximated by this kind of SDE or its exemplar one-dimensional additive noise linear version 
\[
\dd X_\eps(t)=\lambda X_\eps(t)\dd t+\eps \dd W.
\]

In fact, it was shown in~\cite{Bakhtin:MR2935120} that the exit times for Glauber dynamics for the Curie--Weiss mean field model belong to the universality class associated with~\eqref{eq:limit for diffusion}, i.e., they satisfy a similar limit theorem under the infinite system size limit, 
with limiting distribution being $\ln|N|$ up to scaling and translation. These results were used in \cite{Bakhtin-Correll:MR2983392} in the context of decision/reaction times in psychology.

It is interesting to explore this universality class further and study processes of totally different nature that exhibit similar behavior. In this paper, we study a family of processes with random switching, also known under the names of hybrid systems, piecewise deterministic Markov processes (PDMP), and random evolutions. 
The bibliography on these processes is growing. Interestingly, they were introduced and rediscovered many times by different groups of researchers.  Here we give just a few references to works of some of these groups:
\cite{Kac-telegraph:MR0510166},  \cite{Davis}, \cite{Hersh-story:MR1962927},  
\cite{Anisimov:MR2437051}, \cite{Faggionato}, \cite{Yin},  \cite{Malrieu},  \cite{Bakhtin-Hurth}, \cite{LawleyMattinglyReed2015}, \cite{Malrieu_2015}.

 In general, these processes are defined by a family of vector fields and a collection of rates of switching between those vector fields. 
At each time the system is in the state where it evolves along one of the vector fields from the family. At random times, the system jumps between states switching active vector fields from one to another according to the prescribed Markovian rates.

In the limit of infinite switching rates, the evolution can be effectively described by the law of large numbers through the averaging of the vector fields involved. One can also state a central limit theorem and, moreover, a functional central limit theorem for such systems on a finite time interval, see, e.g.,
\cite[Chapter 4]{Anisimov:MR2437051}.

More interesting questions involve the behavior of such systems over unbounded time intervals.
In this paper, we study a class of switching processes on time scales logarithmic in the switching rate $\mu$ and show that it belongs to the universality class associated with~\eqref{eq:limit for diffusion}. We consider processes driven intermittently by two vector fields  in one dimension. The
main condition on these vector fields is that their average defines an unstable critical point. The exit from a neighborhood of that unstable  equilibrium takes  a logarithmically long time in the switching rate~$\mu$. Thus, similarly to the situation in~\cite{Bakhtin:MR2935120},  the usual techniques of weak convergence on finite time horizon are not sufficient for obtaining the desired universality result and have to be supplemented with additional arguments.

Let us now describe the system we are interested in and our main result more formally.
Let $f_1,f_{-1}\in\cC^2(\RRR)$ with $a_1=f_1'(0)$ and $a_{-1}=f_{-1}'(0)$. We also 
define
\begin{equation}
\label{eq:F}
F(x)= \frac{1}{2}(f_1(x)+f_{-1}(x)),\quad x\in\RRR,
\end{equation}
and require that $a=F'(0)=(a_1+a_{-1})/2 >0$. Furthermore, we require that $|f_1(0)|=|f_{-1}(0)|>0$ and $\sgn (F(x))=\sgn(x)$, where $\sgn (z)= z/|z|$ for $z\not= 0$ and $\sgn (0)=0$.
We will study the dynamics driven by these functions on a finite segment $[-R,R]$ for some $R>0$, so
without loss of generality we will assume that the functions $f_{\pm1}$ and their first two derivatives
are bounded.

An example of such pair of functions is given by $f_1(x)=e^{ax}$, $f_{-1}(x)=-e^{-ax}$ for $x\in[-R,R]$ and $a>0$.

Let $(\grs^\mu_t)_{t\geq 0}$ be a homogeneous, rate $\mu >0$, right continuous Markov process on the state space $\{+1 ,-1\}$ with an arbitrary initial distribution on $\{+1 ,-1\}$ at time $0$. The realizations of this process almost surely make finitely many switches between $1$ and $-1$ on any finite time interval, so omitting
the exceptional set, we can work on a probability space $(\Omega,\cF,P)$ that guarantees that the
number of switches is locally finite for {\it all} realizations.

We will study the random trajectories $(x^\mu_t)_{t\geq 0}$ defined by 
\begin{align*}
\frac{\dd x^\mu_t}{\dd t}&=f_{\grs^\mu_t}(x^\mu_t),\quad t\ge 0, \\
x^\mu_0&=0.
\end{align*}
The paths of the stochastic process $(x^\mu_t)_{t\geq 0}$ are continuous. They switch between the dynamics governed by $f_{1}$ and $f_{-1}$ intermittently, being controlled by the Markov chain $(\grs^\mu_t)_{t\geq 0}$. For any $r>0$, we define the exit time from the interval $[-r,r]$:
\[
\grt^\mu(r)=\inf\{t: |x^\mu_t|\geq r\}.
\]

Our main result describes the joint asymptotic behavior of the exit time $\grt^\mu(r)$ and 
exit location $x^\mu_{\grt^\mu(r)}$ as $\mu \to \infty$. The sign of $x^\mu_{\grt^\mu(r)}$ can be interpreted as a decision between two alternative directions of exit made by the system by the time $\grt^\mu(r)$. Let us define
\begin{equation}
\label{eq:K}
K(r)=\int_0^r \left(\frac{1}{F(x)}-\frac{1}{ax}\right) \dd x, \quad r\not=0.
\end{equation}
Let us also define $D(0)$ to be arbitrary and
\[
D(r)=K(r)+\frac{\log |r|}{a}+\frac{\log (\sqrt{2a}/|f_1(0)|)}{a}, \quad r\not=0.
\]

\begin{thm}For any $r>0$, as $\mu\to\infty$,
\[
\left(x^\mu_{\grt^\mu(r)},\ \grt^\mu(r)- \frac{1}{2a}\log\mu  \right)\ \overset{d}{\longrightarrow}\  \left(r\cdot\sgn N,\ - \frac{1}{a}\log |N| + D(r\cdot \sgn N)\right),
\]
where $N$ is a standard Gaussian random variable.
\label{mainthm}
\end{thm}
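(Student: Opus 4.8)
The plan is to reduce the switching dynamics to a perturbation of the averaged linear dynamics near the origin, and then to match the computation with the known exit asymptotics in the universality class of \eqref{eq:limit for diffusion}. Concretely, I would decompose the exit into three stages: (i) an initial ``escape from the origin'' phase, during which the process, started at $0$, is pushed away from $0$ by the order-one drift $f_{\pm 1}(0)$ and a central-limit-type fluctuation builds up; (ii) a ``linear growth'' phase, during which $x^\mu_t$ is small but bounded away from $0$, the dynamics is governed by the linearization with averaged slope $a>0$ plus a fast-switching martingale term, and the process grows roughly like $e^{a t}$ times a random amplitude; (iii) a ``deterministic exit'' phase, during which $|x^\mu_t|$ is of order one, the fast-switching noise has relative size $O(\mu^{-1/2})$ and is negligible, and the time to reach level $r$ from a level $\delta$ is, to leading order, the deterministic quantity $\int_\delta^r \dd x/F(x)$, whose regularized version is encoded in $K(r)$ and the $\frac{\log|r|}{a}$ term.

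The key analytic tool is a change of variables. Define $y^\mu_t$ via an appropriate Lamperti-type transform $y = G(x)$ where $G'(x) = a/F(x)$ near $0$ (so $G(x)\sim \log|x|$), which linearizes the averaged drift; then $y^\mu_t$ satisfies $\dd y^\mu_t = a\,\dd t + (\text{switching fluctuation})\,\dd t$. The fluctuation term, after integration, is driven by $\int_0^t (f_{\sigma^\mu_s}(x^\mu_s)/F(x^\mu_s) - \text{something})\,\dd s$; by the functional CLT for fast switching on finite time intervals (e.g. \cite[Chapter 4]{Anisimov:MR2437051}), on the relevant $O(1)$-in-rescaled-time window this converges, after scaling by $\sqrt\mu$, to a Brownian motion with an explicit variance $\Sigma^2$ determined by the stationary measure $(1/2,1/2)$ of $\sigma^\mu$, the holding-time structure, and the values $f_{\pm1}(0)$. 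This is exactly the mechanism by which the problem enters the universality class: near the origin the rescaled process behaves like $\dd Y = aY\,\dd t + (\Sigma/\sqrt\mu)\,\dd W$, and the classical computation (as in \cite{Bakhtin-SPA:MR2411523}, \cite{Bak2011}) then yields that $\tau^\mu(r) - \frac{1}{2a}\log\mu$ converges to $-\frac1a\log|N| + \text{const}$, with $N$ Gaussian arising as the a.s.\ limit of the rescaled ``amplitude'' $e^{-at}\cdot(\Sigma/\sqrt\mu)\int_0^t e^{as}\dd W_s$; the sign of $N$ determines the exit side, which accounts for the $r\cdot\sgn N$ in the first coordinate. The additive constant is then pinned down by carefully tracking: (a) the contribution $K(r)+\frac{\log|r|}{a}$ from stage (iii), i.e.\ from replacing the linear drift by the true $F$ away from $0$; and (b) the contribution $\frac{1}{a}\log(\sqrt{2a}/|f_1(0)|)$, which is where the specific variance $\Sigma^2$ and the initial condition $x_0=0$ (rather than a point on the stable manifold) enter — matching $\Sigma^2 = f_1(0)^2/(2a)\cdot(\text{something})$ so that $\Sigma/\sqrt{2a}$ produces precisely $|f_1(0)|/(\sqrt{2a})$ after absorbing into $\log|N|$.

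The main obstacle, as the authors themselves flag, is that the convergence above is only available on bounded (rescaled) time horizons, whereas $\tau^\mu(r)$ lives on time scale $\frac{1}{2a}\log\mu\to\infty$. So the heart of the proof is a uniform-in-$\mu$ control argument showing that (1) with high probability the process escapes a tiny neighborhood $[-\eta,\eta]$ of $0$ by rescaled time $O(1)$ and, once it has escaped to level $\eta$, the remaining exit time is essentially deterministic plus a negligible error, and (2) the ``amplitude'' random variable stabilizes: one must show that the martingale $\int_0^t e^{-as}(\dots)\dd(\text{switching noise})_s$ converges almost surely (not just the finite-horizon rescaled version), so that its limit is a well-defined Gaussian $N$, and that the process cannot return close to $0$ after escaping (a ``no return'' estimate), which requires a Gronwall-type a priori bound combined with the exponential instability. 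Handling the boundary between the CLT regime and the deterministic regime — i.e.\ sending the cutoff $\eta\to 0$ after $\mu\to\infty$ and showing the two descriptions glue with matching constants — is the delicate quantitative step, and is presumably where most of the technical work in the paper goes.
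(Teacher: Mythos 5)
Your three-stage plan (escape from $0$, CLT-scale growth, deterministic follow-the-averaged-flow) captures the paper's structure, and your reading of the constant $D$ as a deterministic-phase correction plus an initial-phase normalization is correct. But two of your concrete technical steps would not go through as stated, and fixing them is essentially the content of the paper. First, the Lamperti transform with $G'(x)=a/F(x)$ is singular at $x=0$ (since $F(0)=0$), and the process starts at $x_0=0$, so $G(x_0)=-\infty$; the initial phase cannot be analyzed in the transformed coordinate. The paper instead stays in the $x$-variable, Taylor-expands $f_\sigma$ to second order around $0$, and applies variation of constants to obtain $x_t = e^{at}\left(\frac{f_1(0)}{2\mu}\int_0^t e^{-as}\,\dd(Z-\sigma)_s + \text{remainders}\right)$, which is regular at the origin and exhibits the Gaussian amplitude directly.

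Second, your gluing procedure uses a fixed cutoff $\eta$ followed by $\eta\to 0$ after $\mu\to\infty$. The paper instead stops at the $\mu$-dependent level $\mu^{-\gamma}$ with $\gamma\in(1/4,1/2)$, and this particular window is essential, not cosmetic. The condition $\gamma<1/2$ gives $\mu^{-\gamma}\gg\mu^{-1/2}$, so the exit time $\theta^\mu$ from $[-\mu^{-\gamma},\mu^{-\gamma}]$ is of order $\frac{1/2-\gamma}{a}\log\mu$, long enough for the martingale CLT to stabilize and produce a nondegenerate amplitude $H^\mu$. The condition $\gamma>1/4$ makes the quadratic Taylor remainder, of size $O(\mu^{-2\gamma})$, negligible at the $\mu^{-1/2}$ CLT scale (explicitly, $\sqrt{\mu}\,\mu^{-2\gamma}\to 0$). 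With a fixed $\eta$, the nonlinear remainder is $O(\eta^2)$ and does not vanish as $\mu\to\infty$, so the subsequent $\eta\to 0$ limit would require uniform-in-$\eta$ rate estimates that your sketch does not supply; choosing $\mu^{-\gamma}$ makes a single limit suffice. Finally, the ``no-return'' estimate you anticipate is not a separate ingredient: once the process has exited $[-\mu^{-\gamma},\mu^{-\gamma}]$, the Gronwall argument showing it tracks the averaged flow $S^t$ uniformly over the relevant horizon, together with the monotonicity of $S^t$ away from $0$, already rules out returns.
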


Our strategy for the proof of Theorem \ref{mainthm} consists of studying the asymptotic exit time of $(x^\mu_t)_{t\geq 0}$ from the interval $[-\mu^\grg, \mu^\grg]$ for $\grg\in(1/4, 1/2)$, then the remaining time to hit the boundary $\{r, -r\}$. In Lemma \ref{exitth}, we show that up to an additive constant depending on $a$ and $|f_1(0)|$ (the last term in the definition of $D$), the time needed to exit $[-\mu^\grg, \mu^\grg]$ is $- \frac{1}{a}\log |N|+\frac{1/2-\grg}{a}\log \mu$, after which the process becomes deterministic in the limit $\mu\to\infty$, and is driven by $F$, see Lemma \ref{supr}. By Lemma \ref{dettime}, the process driven by $F$ and started from $\{\mu^\grg, -\mu^\grg\}$ requires $K(r\cdot \sgn N)+\frac{\log|r|}{a}+\frac{\grg}{a}\log\mu$ time to hit $\{r, -r\}$, depending on the sign of exit direction $\sgn N$. Summing up the two contributions to $\grt^\mu(r)$, the terms $\pm \frac{\grg}{a}\log\mu$ cancel, and we obtain Theorem \ref{mainthm}.

{\bf Acknowledgment.} YB is grateful to NSF for partial support via grant DMS-1811444.

\section{Proof}

For brevity, throughout this section, we will often omit $\mu$ in the notation and use $\grs_t=\grs^\mu_t$, $x_t=x^\mu_t$, $\tau(r)=\tau^\mu(r)$, etc.

By Taylor's theorem, there are functions $R_{1}, R_{-1}:\RRR\to\RRR$ such that 
\begin{equation}
f_\grs(x)=\grs f_1(0)+a_\grs x+R_\grs(x),\quad  x\in\RRR,\ \grs\in\{1,-1\},
\label{eq:Taylor}
\end{equation}
and $|R_\grs(x)|\leq c \,  2^{-1} x^2$, where
\begin{equation}
\label{eq:c}
c=\max_{\grs \in \{-1,1\}}\sup_{x\in\RRR}|f_\grs''(x)|.
\end{equation} 

The generator of the Markov process $(\grs_t,x_t)$ on any bounded, smooth function  $g:\{-1,1\}\times\RRR\to\RRR$ is given by
\begin{equation}
\cL g(\grs,x)=f_\grs(x) \del_x g(\grs,x) + \mu (g(-\grs,x)-g(\grs,x)).
\label{generator}
\end{equation}
Applying \eqref{generator} to the functions $g(\grs,x)=\grs$ and $\tilde g(\grs,x)= \grs x$ we obtain by Proposition 1.7 in \cite[Chapter 4]{Ethier-Kurtz:MR838085} that the processes
\begin{align}
Z_t&=Z_t^\mu=\grs_t+2\mu \int_0^t \grs_s \dd s, \nonumber\\
\tilde Z_t&=\tilde Z_t^\mu=\grs_t x_t  -\int_0^t f_{\grs_s}(x_s)\grs_s\dd s+2\mu \int_0^t \grs_s   x_s\dd s
\label{eq:martinagles}
\end{align}
are local martingales with quadratic variations $[Z]_t=4 B(t)$, where $B(t)=|\{s\in [0,t]: \grs_s \not = \grs_{-s} \}|$ denotes the number of jumps of $\grs$ up to time $t\geq 0$, and
\[
[\tilde Z]_t=4\sum_{s\in [0,t]:\ \grs_s \not = \grs_{-s}} |x_s|^2.
\]  
Moreover, the true martingale property also follows since there is a constant $C>0$ such that $|x_t|\le Ct$ for all $t>0$ due to our
assumptions on $f_{\pm1}$. Note that $(B(t))_{t\geq 0}$ is a rate $\mu$ Poisson process.

Integration of \eqref{eq:Taylor} and substitution of \eqref{eq:martinagles} gives for any $t\in[0,\infty)$
\begin{align*}
 x_t&=\int_0^t f_{\grs_s}(x_s)\dd s=\int_0^t \left(\grs_s f_1(0)+ax_s+(a_{\grs_s}-a) x_s+ R_{\grs_s}( x_s)\right) \dd s\\
&=a \int_0^t  x_s \dd s+\frac{f_1(0)}{2\mu}(Z_t-\grs_t)+\int_0^t  (a_{\grs_s}-a)x_s \dd s+ \int_0^t R_{\grs_s}( x_s) \dd s.
\end{align*}
Noting that $a_{\grs_s}-a=(\Delta a/2) \grs_s$ with $\grD a= a_1-a_{-1}$, we can  use \eqref{eq:martinagles} to rewrite this as
\begin{align*}
x_t=&a \int_0^t  x_s \dd s+\frac{f_1(0)}{2\mu}(Z_t-\grs_t)+\frac{\grD a}{4\mu}\int_0^t f_{\grs_s}(x_s)\grs_s \dd s+\frac{\grD a}{4\mu}(\tilde Z_t-\grs_t x_t) +\int_0^t R_{\grs_s}( x_s) \dd s.
\end{align*}
The variation of constants formula gives for any $t\in[0,\infty)$
\begin{multline}
 x_t=e^{at} \left(\frac{f_1(0)}{2 \mu} \int_0^t e^{-a s} \dd (Z-\grs)_s+\frac{\grD a}{4\mu}\int_0^t e^{-as} f_{\grs_s}(x_s)\grs_s\dd s
 \right.\\ \left.
 +\frac{\grD a}{4\mu}\int_0^t e^{-as}\dd(\tilde Z-\grs x)_s +  \int_0^t  e^{-a s} R_{\grs_s}( x_s) \dd s \right).
\label{chov}
\end{multline}

\begin{lem}
Suppose there is a sequence of stopping times $\theta^\mu$ with respect to the natural filtration of $\grs$ satisfying
\[
\theta^\mu\overset{P}{\rightarrow}\infty, \qquad \textrm{as } \mu\to\infty.
\]
Then as $\mu\to \infty$, the random variable
\begin{equation}
\label{eq:I-mu}
I^\mu=\frac{1}{\sqrt \mu}\int_0^{\theta^\mu} e^{-a s} \dd (Z^\mu-\grs)_s
\end{equation}
converges in distribution to $\cN(0, 2a^{-1})$.
\label{normlimit}
\end{lem}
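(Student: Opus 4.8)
The plan is to turn $I^\mu$ into a stochastic integral of a deterministic integrand against the square-integrable martingale $Z=Z^\mu$, up to error terms that vanish, and then to apply a martingale central limit theorem. Since $Z_s-\grs_s=2\mu\int_0^s\grs_u\,\dd u$ has finite variation, Stieltjes integration by parts in $\int_0^{\theta^\mu}e^{-as}\,\dd\grs_s$ (no It\^o correction, as $s\mapsto e^{-as}$ is continuous and of bounded variation) gives
\[
\int_0^{\theta^\mu}e^{-as}\,\dd(Z-\grs)_s=\int_0^{\theta^\mu}e^{-as}\,\dd Z_s-e^{-a\theta^\mu}\grs_{\theta^\mu}+\grs_0-a\int_0^{\theta^\mu}e^{-as}\grs_s\,\dd s .
\]
Using $Z_s-\grs_s=2\mu\int_0^s\grs_u\,\dd u$ once more, $\int_0^{\theta^\mu}e^{-as}\grs_s\,\dd s=\frac1{2\mu}\int_0^{\theta^\mu}e^{-as}\,\dd(Z-\grs)_s=\frac{I^\mu}{2\sqrt\mu}$, so after dividing by $\sqrt\mu$ the identity becomes self-referential in $I^\mu$ and can be solved:
\[
\Bigl(1+\tfrac a{2\mu}\Bigr)I^\mu=N^\mu_{\theta^\mu}+\frac1{\sqrt\mu}\bigl(\grs_0-e^{-a\theta^\mu}\grs_{\theta^\mu}\bigr),\qquad N^\mu_t:=\frac1{\sqrt\mu}\int_0^t e^{-as}\,\dd Z_s .
\]
Since $|\grs_0-e^{-a\theta^\mu}\grs_{\theta^\mu}|\le2$ and $1+a/(2\mu)\to1$, it suffices to prove $N^\mu_{\theta^\mu}\overset{d}{\longrightarrow}\cN(0,2a^{-1})$.

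The process $(N^\mu_t)_{t\ge0}$ is a martingale for the natural filtration of $\grs$ (which also carries $Z$, the jump count $B$, and $x$, since $x$ solves an ODE driven by $\grs$, and for which $\theta^\mu$ is a stopping time), with quadratic variation $[N^\mu]_t=\frac1\mu\int_0^t e^{-2as}\,\dd[Z]_s=\frac4\mu\int_0^t e^{-2as}\,\dd B(s)$. As $B$ is a rate-$\mu$ Poisson process, $E[[N^\mu]_t]=4\int_0^t e^{-2as}\,\dd s\le2a^{-1}$ uniformly in $t$ and $\mu$, so $N^\mu$ is bounded in $L^2$ and $N^\mu_\infty:=\lim_{t\to\infty}N^\mu_t$ exists in $L^2$. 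I would then replace $N^\mu_{\theta^\mu}$ by $N^\mu_\infty$: by optional sampling, together with the fact that the predictable process $\1_{(\theta^\mu,\infty)}(s)e^{-2as}$ integrated against the compensated measure $\dd B(s)-\mu\,\dd s$ has zero mean,
\[
E\bigl[(N^\mu_\infty-N^\mu_{\theta^\mu})^2\bigr]=E\bigl[[N^\mu]_\infty-[N^\mu]_{\theta^\mu}\bigr]=4\,E\!\int_{\theta^\mu}^{\infty}e^{-2as}\,\dd s=\frac2a\,E\bigl[e^{-2a\theta^\mu}\bigr]\longrightarrow0,
\]
where the last step uses $\theta^\mu\overset{P}{\rightarrow}\infty$ and bounded convergence. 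Hence it remains to show $N^\mu_\infty\overset{d}{\longrightarrow}\cN(0,2a^{-1})$.

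For this I would invoke a martingale central limit theorem (see, e.g., \cite[Chapter~7]{Ethier-Kurtz:MR838085}), whose hypotheses are met here: the quadratic variation $[N^\mu]_\infty=\frac4\mu\int_0^\infty e^{-2as}\,\dd B(s)$ converges to $2a^{-1}$ in $L^2$, hence in probability --- its mean equals $2a^{-1}$ exactly, and, since $\int g\,\dd B$ has variance $\mu\int g^2$ for deterministic $g$, its variance equals $\frac{16}{\mu^2}\cdot\frac\mu{4a}=\frac4{a\mu}\to0$ --- while the jumps satisfy $\sup_t|\Delta N^\mu_t|\le2\mu^{-1/2}\to0$, because $\grs$, hence $Z$, jumps by at most $2$. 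Applying the theorem on a fixed horizon $[0,T]$ gives $N^\mu_T\overset{d}{\longrightarrow}\cN\bigl(0,\tfrac2a(1-e^{-2aT})\bigr)$ as $\mu\to\infty$; since $E[(N^\mu_\infty-N^\mu_T)^2]=\frac2a e^{-2aT}$ uniformly in $\mu$, a standard approximation argument (letting $T\to\infty$) upgrades this to the desired $N^\mu_\infty\overset{d}{\longrightarrow}\cN(0,2a^{-1})$.

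The analytic estimates --- the integration by parts and the crude bounds on the boundary and remainder terms --- are routine. What needs care is the self-referential identity forced by integration by parts, which requires solving for $I^\mu$ rather than merely bounding it, and, above all, the two limit interchanges around the random time $\theta^\mu$: the $L^2$-replacement of $N^\mu_{\theta^\mu}$ by the terminal value $N^\mu_\infty$, which relies on optional sampling in the $\grs$-filtration and on the predictability of the stochastic interval $(\theta^\mu,\infty)$ in order to compensate $B$, and then the passage from a fixed horizon to the full half-line in the martingale CLT. I expect the first of these interchanges --- making the stopping time disappear cleanly --- to be the main obstacle.
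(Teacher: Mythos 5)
Your proof is correct, but it takes a genuinely different route from the paper's in two places. First, for the $\int_0^{\theta^\mu}e^{-as}\,\dd\grs_s$ term the paper simply observes, by the alternating series test, that this Stieltjes sum lies in $(-2,2)$ and hence vanishes after division by $\sqrt\mu$; you instead integrate by parts and then solve a self-referential identity in $I^\mu$, which works but is more elaborate than needed (the crude bound $|\grs_0-e^{-a\theta^\mu}\grs_{\theta^\mu}|+a\int_0^\infty e^{-as}\,\dd s\le 3$ from your own integration-by-parts formula would already do). The more substantive divergence is in how each argument handles the random endpoint $\theta^\mu$. The paper keeps the stopping inside the martingale, $V_t=\int_0^{t\wedge\theta^\mu}e^{-as}\,\dd Z_s$, then applies the time change $U_t=V_{g(t)}$ with $g(s)=-\tfrac{1}{2a}\log(1-as/2)$ so that $[0,\infty)$ is compressed into the fixed interval $[0,2a^{-1}]$, and proves $\mu^{-1}[U]_t\to t$ in probability via a law of large numbers for the Poisson jump counts of $\grs$; the hypothesis $\theta^\mu\xrightarrow{P}\infty$ enters through $\mu^{-1}B(g(t)\wedge h\wedge\theta^\mu)\to g(t)\wedge h$. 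You instead remove the stopping time at the $L^2$ level, computing $E[(N^\mu_\infty-N^\mu_{\theta^\mu})^2]=\tfrac{2}{a}E[e^{-2a\theta^\mu}]\to0$ by the compensator (Campbell) identity and bounded convergence, then apply the Ethier--Kurtz martingale CLT on a fixed horizon $[0,T]$ and pass $T\to\infty$ via the uniform-in-$\mu$ tail bound $E[(N^\mu_\infty-N^\mu_T)^2]=\tfrac{2}{a}e^{-2aT}$. Both proofs rest on the same martingale CLT; the paper's time change buys a one-shot application at the fixed time $2a^{-1}$, whereas your $L^2$-replacement makes the quadratic-variation LLN for $\tfrac{4}{\mu}\int_0^T e^{-2as}\,\dd B(s)$ immediate at the cost of one extra (routine) truncation step in $T$.
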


\begin{proof} By the alternating series test,  $\int_0^{\theta^\mu} e^{-a s} \dd \grs_s$ exists and belongs to the interval $(-2,2)$. Consequently, $\mu^{-1/2}\int_0^{\theta^\mu} e^{-a s} \dd \grs_s\to 0$ as $\mu\to\infty$. Therefore, we only need to study convergence in distribution of the part with the martingale integrator.

For the rest of the proof, we follow the ideas in the proof of \cite[Lemmas 3.1, 3.2]{Bakhtin:MR2935120}. First, we define a martingale 
\begin{equation}
V_t=V^\mu_t=\int_0^{t\wedge \theta^\mu}e^{-a s} \dd Z_s
\label{eq: V-mart}
\end{equation}
with quadratic variation $[V]_t=\int_0^{t\wedge \theta^\mu}e^{-2a s} \dd [Z]_s$. Then we define a time-changed martingale $U_t=U^\mu_t=V_{g(t)}$ for $t\in[0, 2a^{-1}]$, where
\[
g(s)=-\frac{\log(1-as/2)}{2a}, \qquad s\in[0, 2a^{-1}),
\]
and $g(2a^{-1})=\infty$. 
We will prove that for any $t\in[0, 2a^{-1}]$, the quadratic variation of $U$ satisfies
\begin{equation}
\label{eq:lln-for-quad-var}
\mu^{-1} [U]_t\stackrel{P}{\to} t,\quad \mu\to\infty.
\end{equation}
Then, by  \cite[Theorem 3.1]{Bakhtin:MR2935120}, which is just a specific case of Theorem 1.4 in \cite[Chapter 7]{Ethier-Kurtz:MR838085},  $\mu^{-1/2}U_t$ converges to $\cN(0,t)$ in distribution for any $t\in[0,2a^{-1}]$ as $\mu\to \infty$. Therefore, $V^\mu_\infty = U^\mu_{2a^{-1}} \overset{d}{\rightarrow} \cN(0, 2a^{-1})$ as $\mu\to\infty$.

It remains to prove~\eqref{eq:lln-for-quad-var}. For all $t\in[0, 2a^{-1}]$,
\begin{equation}
\label{eq:quad-var-sum-over-jumps}
[U]_t=\underset{\grs(s)\not = \grs(s-)}{\sum_{s: s\leq g(t)\wedge \theta^\mu}} H(s)
\end{equation}
for $H(s)=4 e^{-2s}$.
We claim that for any non-increasing function $H(\cdot)$,
\begin{equation}
\label{eq:lln-sums-to-integrals}
\mu^{-1}\underset{\grs(s)\not = \grs(s-)}{\sum_{s: s\leq g(t)\wedge \theta^\mu}} H(s)\stackrel{P}{\to}
\int_0^{g(t)}H(s) \dd s, \quad \mu\to\infty.
\end{equation}
To prove this relation, we first note that it holds for $H(s)=\1_{[0,h]}(s)$, for any $h>0$,  
since the Law of Large Numbers implies
\begin{align*}
 \mu^{-1}\underset{\grs(s)\not = \grs(s-)}{\sum_{s: s\leq g(t)\wedge \theta^\mu}} \1_{[0,h]}(s)
=\mu^{-1} B(g(t)\wedge h \wedge \theta^\mu) 
  \overset{P}{\rightarrow} g(t)\wedge h = \int_0^{g(t)} \1_{[0,h]}(s) \dd s.
\end{align*}
Using this and approximating monotone functions with sums of indicator functions, we obtain~\eqref{eq:lln-sums-to-integrals} which, combined with \eqref{eq:quad-var-sum-over-jumps}, 
gives~\eqref{eq:lln-for-quad-var}  and completes the proof of the lemma.
\end{proof}

 For any $\gamma>0$ we can define $\theta^\mu=\inf\{t: |x_t|\geq \mu^{-\grg}\}$. Note that  for $\mu$ large enough to ensure $\mu^{-\grg}\leq r$, we have $\theta^\mu \leq\tau^\mu(r).$

\begin{lem}
The random variables
\begin{equation}
J^\mu=\int_0^{\theta^\mu} e^{-as}\dd(\tilde Z-\grs x)_s
\label{eq:J-mu}
\end{equation}
satisfy
\begin{equation}
P(|J^\mu|>\mu^{-\grd+1/2})\to 0
\label{eq:Jconv}
\end{equation} 
for any $\grd<\grg$ as $\mu\to \infty$. Consequently, the sequence $(\mu^{-1/2}J^\mu)$ converges to zero in probability as $\mu\to\infty$.
\label{Jconv}
\end{lem}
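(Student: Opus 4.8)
The plan is to split $J^\mu$ into a martingale part and a part of bounded variation and estimate each separately. Write
\[
J^\mu=M^\mu_{\theta^\mu}-\int_0^{\theta^\mu}e^{-as}\,\dd(\grs_s x_s),\qquad M^\mu_t=\int_0^{t\wedge\theta^\mu}e^{-as}\,\dd\tilde Z_s .
\]
For the bounded-variation piece, note that $s\mapsto\grs_s x_s$ has only finitely many jumps on $[0,\theta^\mu]$ and is $C^1$ between them, while $s\mapsto e^{-as}$ is $C^1$, so Stieltjes integration by parts gives
\[
\int_0^{\theta^\mu}e^{-as}\,\dd(\grs_s x_s)=e^{-a\theta^\mu}\grs_{\theta^\mu}x_{\theta^\mu}-\grs_0 x_0+a\int_0^{\theta^\mu}e^{-as}\grs_s x_s\,\dd s .
\]
Since $x_0=0$ and $|x_s|\le\mu^{-\grg}$ for all $s\le\theta^\mu$ — this uses the definition of $\theta^\mu$ together with the continuity of $t\mapsto x_t$, which keeps the bound valid even at $\theta^\mu$ itself and at jump times of $\grs$ — the right-hand side is bounded in absolute value by $\mu^{-\grg}+a\mu^{-\grg}\int_0^\infty e^{-as}\,\dd s=2\mu^{-\grg}$. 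This is a deterministic bound which, because $\grd<\grg<1/2$, is $o(\mu^{1/2-\grd})$.

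For the martingale piece, the key estimate is a second-moment bound. Because $e^{-as}$ is deterministic and bounded, $\tilde Z$ is a martingale for the natural filtration of $\grs$, and $\theta^\mu$ is a stopping time for that filtration (the path of $x$ is a measurable functional of the path of $\grs$), the process $M^\mu$ is a martingale with quadratic variation, using the formula for $[\tilde Z]$ from the excerpt,
\[
[M^\mu]_t=\int_0^{t\wedge\theta^\mu}e^{-2as}\,\dd[\tilde Z]_s=4\!\!\underset{\grs_s\neq\grs_{s-}}{\sum_{s\le t\wedge\theta^\mu}}\!\! e^{-2as}|x_s|^2 .
\]
Using $|x_s|\le\mu^{-\grg}$ on $[0,\theta^\mu]$ and then discarding the constraint $s\le\theta^\mu$ gives $[M^\mu]_\infty\le 4\mu^{-2\grg}\int_0^\infty e^{-2as}\,\dd B(s)$, and since $(B(t))$ is a rate-$\mu$ Poisson process, $\EEE[[M^\mu]_\infty]\le 4\mu^{-2\grg}\cdot\mu\int_0^\infty e^{-2as}\,\dd s=2\mu^{1-2\grg}/a$. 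In particular $M^\mu$ is $L^2$-bounded (hence a genuine martingale, not merely a local one), $M^\mu_{\theta^\mu}=\int_0^{\theta^\mu}e^{-as}\,\dd\tilde Z_s$, and $\EEE[(M^\mu_{\theta^\mu})^2]=\EEE[[M^\mu]_\infty]\le 2\mu^{1-2\grg}/a$.

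Combining the two pieces, $|J^\mu|\le|M^\mu_{\theta^\mu}|+2\mu^{-\grg}$, so for all $\mu$ large enough that $2\mu^{-\grg}\le\frac12\mu^{1/2-\grd}$, Chebyshev's inequality yields
\[
P\big(|J^\mu|>\mu^{1/2-\grd}\big)\le P\Big(|M^\mu_{\theta^\mu}|>\tfrac12\mu^{1/2-\grd}\Big)\le\frac{4\,\EEE[(M^\mu_{\theta^\mu})^2]}{\mu^{1-2\grd}}\le\frac{8}{a}\,\mu^{2(\grd-\grg)}\to 0,\qquad\mu\to\infty,
\]
since $\grd<\grg$. This proves \eqref{eq:Jconv}; the final assertion follows by picking any $\grd\in(0,\grg)$ (possible since $\grg>1/4$), giving $P(|\mu^{-1/2}J^\mu|>\mu^{-\grd})\to 0$ with $\mu^{-\grd}\to 0$. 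I do not expect a serious obstacle here: the one conceptual point is the quadratic-variation bound, where the confinement $|x_s|\le\mu^{-\grg}$ on $[0,\theta^\mu]$ turns the naive estimate of order $\mu$ for $\EEE[[M^\mu]_\infty]$ into one of order $\mu^{1-2\grg}$; the only technical care needed is in justifying the true martingale property of $M^\mu$ and the validity of $|x_s|\le\mu^{-\grg}$ at every point of $[0,\theta^\mu]$.
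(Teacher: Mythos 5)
Your proof is correct and follows essentially the same route as the paper's: split $J^\mu$ into a bounded-variation part (integration by parts, pathwise bound $2\mu^{-\grg}$) and a martingale part (Chebyshev plus the $L^2$-isometry $\EEE[M^2]=\EEE[[M]]$, with the quadratic variation bounded using $|x_s|\le\mu^{-\grg}$ on $[0,\theta^\mu]$ and Campbell's formula for the Poisson jump sum). The only cosmetic difference is that you make the final union-bound recombination of the two pieces explicit, which the paper leaves implicit.
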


\begin{proof}
For any $t\geq 0$, we use integration by parts to write
\[
\int_0^{t} e^{-as}\dd(\grs x)_s =e^{-at}\grs_t x_t + a \int_0^t e^{-as} \grs_{s} x_{s} \dd s,
\]
\[
\left|\int_0^{\theta^\mu} e^{-as}\dd(\grs x)_s\right|\leq e^{- a \grth^\mu } |\grs_{\grth^\mu} x_{\grth^\mu}| + a \int_0^\infty e^{-as}|\grs_{s} x_{s} |\dd s \leq 2\mu^{-\grg}.
\] 
For the other term of $J$ including the martingale integrator, we can apply Chebyshev inequality followed by Proposition 6.1 in \cite[Chapter 2]{Ethier-Kurtz:MR838085}
\begin{align*}
P\left(\left| \int_0^{\theta^\mu} e^{-as}\dd\tilde Z_s \right| >\mu^{-\grd+1/2}\right)&\leq \mu^{2\grd-1} \EEE\left[ \left( \int_0^{\theta^\mu} e^{-as}\dd\tilde Z_s \right)^2\right]=\mu^{2\grd-1}\EEE\left[  \int_0^{\theta^\mu} e^{-2as}\dd[\tilde Z]_s \right].
\end{align*}
Since the quadratic variation $[\tilde Z]_{s\wedge\theta^\mu}$ is stochastically dominated by $4\mu^{-2\grg} B(s)$, by the Campbell formula in the second step
\[
 \EEE\left[  \int_0^{\theta^\mu} e^{-2as}\dd[\tilde Z]_s \right]\leq \EEE\left[  \int_0^{\infty} e^{-2as}4\mu^{-2\grg} \dd B(s) \right]  = \frac{4\mu^{1-2\grg}}{2a}, 
\]
which multiplied by $\mu^{2\grd-1}$ converges to zero.
\end{proof}

\begin{lem} If $0<\gamma<\frac{1}{2}$, then $\theta^\mu\overset{P}{\rightarrow}\infty$ as  $\mu\to\infty$.
\end{lem}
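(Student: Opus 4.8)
The plan is to fix $T>0$ and prove $P(\theta^\mu\le T)\to 0$ as $\mu\to\infty$; since $T$ is arbitrary this yields $\theta^\mu\overset{P}{\rightarrow}\infty$. On the event $\{\theta^\mu\le T\}$ the path $x$ is continuous and $x_0=0$, so $0<\theta^\mu<\infty$, $|x_{\theta^\mu}|=\mu^{-\grg}$, and $e^{a\theta^\mu}\le e^{aT}$. Evaluating the variation-of-constants identity~\eqref{chov} at $t=\theta^\mu$ and taking absolute values, we obtain on this event
\[
\mu^{-\grg}\ \le\ e^{aT}\left(\frac{|f_1(0)|}{2\mu}\left|\int_0^{\theta^\mu}e^{-as}\dd(Z-\grs)_s\right|+\frac{|\grD a|}{4\mu}\left|\int_0^{\theta^\mu}e^{-as}f_{\grs_s}(x_s)\grs_s\dd s\right|+\frac{|\grD a|}{4\mu}|J^\mu|+\left|\int_0^{\theta^\mu}e^{-as}R_{\grs_s}(x_s)\dd s\right|\right)=:\Xi^\mu,
\]
where $J^\mu$ is as in~\eqref{eq:J-mu}; all integrals above converge a.s. (absolutely for the $\dd s$-integrals, as an $L^2$-limit for the martingale integral), so $\Xi^\mu$ is a well-defined random variable. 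Hence $\{\theta^\mu\le T\}\subseteq\{\mu^\grg\Xi^\mu\ge 1\}$, and it suffices to show $\mu^\grg\Xi^\mu\overset{P}{\rightarrow}0$.

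I would estimate the four summands of $\Xi^\mu$ separately. For the two integrals against $\dd s$, one uses $|x_s|\le\mu^{-\grg}$ for $s\le\theta^\mu$: with $C=\max_{\grs\in\{1,-1\}}\sup_{\RRR}|f_\grs|$ we get $\big|\int_0^{\theta^\mu}e^{-as}f_{\grs_s}(x_s)\grs_s\dd s\big|\le C/a$, and $|R_\grs(x)|\le\tfrac c2 x^2$ gives $\big|\int_0^{\theta^\mu}e^{-as}R_{\grs_s}(x_s)\dd s\big|\le\tfrac{c}{2a}\mu^{-2\grg}$; after multiplication by $\mu^\grg$ these are $O(\mu^{\grg-1})$ and $O(\mu^{-\grg})$, hence $o(1)$. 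For the $J^\mu$ term, Lemma~\ref{Jconv} gives $\mu^{-1/2}|J^\mu|\overset{P}{\rightarrow}0$, so $\mu^\grg\cdot\tfrac{|\grD a|}{4\mu}|J^\mu|=\tfrac{|\grD a|}{4}\,\mu^{\grg-1/2}\big(\mu^{-1/2}|J^\mu|\big)\overset{P}{\rightarrow}0$ precisely because $\grg<\tfrac12$.

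For the first summand, write $\int_0^{\theta^\mu}e^{-as}\dd(Z-\grs)_s=\int_0^{\theta^\mu}e^{-as}\dd Z_s-\int_0^{\theta^\mu}e^{-as}\dd\grs_s$. The second integral has absolute value at most $2$ (the alternating-series estimate from the proof of Lemma~\ref{normlimit}), so $\mu^{\grg-1}$ times it is $o(1)$. The process $t\mapsto\int_0^t e^{-as}\dd Z_s$ is a martingale with quadratic variation $4\int_0^t e^{-2as}\dd B(s)$, whose expectation at $t=\infty$ equals $2\mu/a$ by the Campbell formula, so it is $L^2$-bounded; Doob's $L^2$ maximal inequality then gives $\EEE\big[\big(\int_0^{\theta^\mu}e^{-as}\dd Z_s\big)^2\big]\le\EEE\big[\sup_{t\ge 0}\big(\int_0^t e^{-as}\dd Z_s\big)^2\big]\le 8\mu/a$, and Chebyshev's inequality yields $P\big(\mu^{\grg-1}\big|\int_0^{\theta^\mu}e^{-as}\dd Z_s\big|>\eps\big)\le 8\mu^{2\grg-1}/(a\eps^2)\to 0$ for every $\eps>0$, again using $\grg<\tfrac12$. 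Combining the four bounds gives $\mu^\grg\Xi^\mu\overset{P}{\rightarrow}0$, which completes the proof.

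The argument is essentially bookkeeping of the four terms in~\eqref{chov}; the one point that needs care is to \emph{avoid} invoking Lemma~\ref{normlimit}, whose hypothesis is exactly the conclusion $\theta^\mu\overset{P}{\rightarrow}\infty$ being proved — this is why the leading stochastic term must be controlled by the direct second-moment bound above rather than by that lemma. The hypothesis $\grg<\tfrac12$ enters only in the last two paragraphs, and it is precisely what makes the $O_P(\mu^{-1/2})$ fluctuations of the dominant term (and of $J^\mu$) negligible on the target scale $\mu^{-\grg}$.
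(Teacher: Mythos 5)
Your proof is correct and follows essentially the same strategy as the paper: evaluate the variation-of-constants identity~\eqref{chov} at $t=\theta^\mu$, bound each of the four terms on $\{\theta^\mu\le T\}$, and derive a contradiction for large $\mu$ from $\grg<\tfrac12$. The only minor deviation is that you control the stochastic-integral term via Doob's $L^2$ maximal inequality and Chebyshev, whereas the paper works directly with the stopped martingale $V_t=\int_0^{t\wedge\theta^\mu}e^{-as}\,\dd Z_s$ and the martingale property of $V^2-[V]$; both give an $O(\mu)$ second-moment bound and lead to the same conclusion.
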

\begin{proof}
We need to check that for an arbitrary $T>0$,
$P(\grth^\mu<T)\to 0$ as $\mu\to\infty$. Recalling the definition of $V$ in \eqref{eq: V-mart} and estimating
\[
\EEE[V]_{\theta^\mu}\leq\frac{2\mu}{a}
\]
by Campbell's formula, then using the Chebyshev inequality and the martingale property of  $V^2-[V]$ (see Proposition 6.1 in \cite[Chapter 2]{Ethier-Kurtz:MR838085}), we obtain
\begin{equation}
\label{eq:estimating-V}
P\left(\frac{1}{2\mu}\left|V_{\theta^\mu}\right|>\mu^{-\grg/2-1/4}\right)\leq\frac{\EEE [V]_{\theta^\mu}}{ 4\mu^2 \mu^{-\grg-1/2}}\leq\frac{2\mu a^{-1}}{4 \mu^2 \mu^{-\grg-1/2}}\to0
\end{equation}
as $\grg<\frac{1}{2}$. By \eqref{eq:Jconv}, we also have
\begin{equation}
P\left(\frac{1}{\mu}|J^\mu|>\mu^{-\grg/2-1/4}\right)\to 0.
\label{eq:estimating-J}
\end{equation}
On the set $\{\grth^\mu<T\}$, we can use (\ref{chov}) and \eqref{eq:c} to see that if $\mu$ is large enough to guarantee $\sup_{\grs\in\{-1,1\}, x\in[-\mu^{-\grg},\mu^{-\grg}] } |f_\grs(x)|\leq 2 |f_1(0)|$, then
\begin{align*}
\mu^{-\grg}=|x_{\grth^\mu}|&=e^{a \grth^\mu} \left|\frac{f_1(0)}{2 \mu} \int_0^{\theta^\mu} e^{-a s} \dd (Z-\grs)_s+  \int_0^{\theta^\mu} e^{-a s} R_{\grs_s}(x_s) \dd s\right. \\
&\qquad+\left.\frac{\grD a}{4\mu}\int_0^{\grth^\mu} e^{-as} f_{\grs_s}(x_s)\grs_s\dd s+\frac{\grD a}{4\mu}J^\mu \right| \\
&\leq e^{aT} \left( \frac{|f_1(0)|}{2\mu}\left|V_{\theta^\mu}\right| + \frac{|f_1(0)|}{\mu}+\frac{c \mu^{-2\grg}}{2a} + \frac{|f_1(0)\grD a|}{2\mu a}+\frac{|\grD a|}{4\mu} |J^\mu| \right).
\end{align*}
Therefore, on the set $\{\grth^\mu<T\}\cap \left\{\frac{1}{2\mu}\left|V^\mu_{\theta^\mu}\right|\leq\mu^{-\grg/2-1/4}, \frac{1}{\mu}|J^\mu|\leq\mu^{-\grg/2-1/4}\right\}$,
\[
\mu^{-\grg}\leq  e^{aT} \left( |f_1(0)| \mu^{-\grg/2-1/4} + \frac{|f_1(0)|}{\mu}+\frac{c \mu^{-2\grg}}{2 a}+ \frac{|f_1(0)\grD a|}{2\mu a}+\frac{|\grD a|}{4} \mu^{-\grg/2-1/4} \right)
\]
that is impossible for large $\mu$ and $\gamma<\frac{1}{2}$. Combining this with~\eqref{eq:estimating-V} and ~\eqref{eq:estimating-J}, we complete the proof.
\end{proof}

\begin{lem} If $\frac{1}{4}<\gamma<\frac{1}{2}$, then
\[
\left( \mathrm{sgn}( x_{\theta^\mu}),\ \theta^\mu-\frac{1/2-\gamma}{a}\log\grm\right)\overset{d}{\rightarrow}\left(\mathrm{sgn}(H),\ - \frac{1}{a}\log |H|\right), \quad  \mu\to\infty,
\]
where $H$ is a random variable with  $\mathrm{Law}(H)=\cN(0, f_1(0)^2 (2a)^{-1})$.
\label{exitth}
\end{lem}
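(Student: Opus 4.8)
The plan is to extract both coordinates of the limit directly from the variation-of-constants formula \eqref{chov} evaluated at $t=\theta^\mu$, where by definition $|x_{\theta^\mu}|=\mu^{-\grg}$ (throughout we use that $\theta^\mu<\infty$ almost surely). Write the bracketed expression in \eqref{chov} at $t=\theta^\mu$ as $M^\mu+S^\mu$, where
\[
M^\mu=\frac{f_1(0)}{2\mu}\int_0^{\theta^\mu}e^{-as}\,\dd(Z-\grs)_s
\]
is the leading term and $S^\mu$ collects the other three terms, so that $x_{\theta^\mu}=e^{a\theta^\mu}(M^\mu+S^\mu)$. Since $x$ is a pathwise functional of $\grs$, the time $\theta^\mu$ is a stopping time with respect to the natural filtration of $\grs$, and $\theta^\mu\overset{P}{\to}\infty$ by the preceding lemma (which uses $\grg<1/2$); hence Lemma \ref{normlimit} applies and gives $\sqrt{\mu}\,M^\mu=\frac{f_1(0)}{2}I^\mu\overset{d}{\to}\cN(0,f_1(0)^2(2a)^{-1})=\mathrm{Law}(H)$.

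The second step is to show $\sqrt{\mu}\,S^\mu\overset{P}{\to}0$, term by term, using $|x_s|\le\mu^{-\grg}$ on $[0,\theta^\mu]$. For $\mu$ large this yields $|f_{\grs_s}(x_s)|\le 2|f_1(0)|$ and, via \eqref{eq:Taylor} and \eqref{eq:c}, $|R_{\grs_s}(x_s)|\le\frac{c}{2}\mu^{-2\grg}$; together with $\int_0^\infty e^{-as}\,\dd s=a^{-1}$ this bounds the $\grD a$-drift term of $S^\mu$ by $O(\mu^{-1})$ and the Taylor-remainder term by $O(\mu^{-2\grg})$, hence after multiplication by $\sqrt{\mu}$ by $O(\mu^{-1/2})$ and $O(\mu^{1/2-2\grg})$ respectively — the latter tending to $0$ precisely because $\grg>1/4$. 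The remaining term $\frac{\grD a}{4\mu}J^\mu$ is controlled by Lemma \ref{Jconv}: $|J^\mu|\le\mu^{-\grd+1/2}$ with probability tending to $1$ for any $\grd<\grg$, so $\sqrt{\mu}\cdot\frac{|\grD a|}{4\mu}|J^\mu|=O(\mu^{-\grd})\to 0$ in probability. By Slutsky's theorem, $W^\mu:=\sqrt{\mu}(M^\mu+S^\mu)\overset{d}{\to}H$.

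For the conclusion, observe that $e^{a\theta^\mu}>0$, so the identity $x_{\theta^\mu}=e^{a\theta^\mu}(M^\mu+S^\mu)$ together with $|x_{\theta^\mu}|=\mu^{-\grg}$ forces the two exact relations $\mathrm{sgn}(x_{\theta^\mu})=\mathrm{sgn}(W^\mu)$ and $e^{a\theta^\mu}=\mu^{1/2-\grg}/|W^\mu|$; the second one rearranges into $\theta^\mu-\frac{1/2-\grg}{a}\log\mu=-\frac{1}{a}\log|W^\mu|$. Thus the pair in the statement is exactly $\Phi(W^\mu)$, where $\Phi(w)=\bigl(\mathrm{sgn}(w),\,-a^{-1}\log|w|\bigr)$ is continuous on $\RRR\setminus\{0\}$. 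Since $W^\mu\overset{d}{\to}H$ and $P(H=0)=0$ (because $f_1(0)\neq 0$), the continuous mapping theorem gives $\Phi(W^\mu)\overset{d}{\to}\Phi(H)=\bigl(\mathrm{sgn}(H),\,-a^{-1}\log|H|\bigr)$, which is the claim.

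I expect the delicate point to be the uniform-in-$\mu$ estimate of $\sqrt{\mu}\,S^\mu$: one has to keep exact track of the powers of $\mu$ in the Taylor-remainder bound — the only place where the assumption $\grg>1/4$ is used — and make sure the event carrying the $J^\mu$-bound from Lemma \ref{Jconv} has probability approaching $1$. Once $W^\mu\overset{d}{\to}H$ is established, the reduction of the joint statement to an application of the continuous mapping theorem via the two exact identities above is routine.
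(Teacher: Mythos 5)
Your proof is correct and follows essentially the same route as the paper's: your $W^\mu=\sqrt{\mu}\,(M^\mu+S^\mu)$ is exactly the quantity the paper calls $H^\mu$, the term-by-term estimates (Lemma~\ref{normlimit} for the leading term, the $O(\mu^{1/2-2\grg})$ bound for the Taylor remainder using $\grg>1/4$, boundedness of $f_{\grs}$ for the drift term, and Lemma~\ref{Jconv} for $J^\mu$) match the paper's, and the final reduction via the exact identities $\sgn(x_{\theta^\mu})=\sgn(W^\mu)$ and $\theta^\mu-\frac{1/2-\grg}{a}\log\mu=-\frac{1}{a}\log|W^\mu|$ followed by the continuous mapping theorem is precisely what the paper does.
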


\begin{proof}
Using (\ref{chov}) at time $\grth^\mu$, 
and introducing 
\begin{align}
\label{eq:H-mu}
H^\mu=\frac{f_1(0)}{2} I^\mu  + \sqrt \mu \int_0^{\theta^\mu} e^{-a s} R_{\grs_s}(x_s) \dd s 
+\frac{\grD a}{4\sqrt\mu}\int_0^{\grth^\mu} e^{-as} f_{\grs_s}(x_s)\grs_s\dd s+\frac{\grD a}{4\sqrt\mu}J^\mu,
\end{align}
where $I^\mu$ was defined in~\eqref{eq:I-mu} and $J^\mu$ in \eqref{eq:J-mu},
we obtain 
\begin{equation}
\label{eq:expression-for-theta-mu}
\grth^\mu=\frac{1/2-\grg}{a}\log \mu - \frac{1}{a}\log \left| H^\mu \right|
\end{equation}
and
\begin{equation}
\label{eq:exit-sign}
\sgn (x_{\theta^\mu}) = \sgn H^\mu.
\end{equation}
The first term on the right-hand side of~\eqref{eq:H-mu} converges in distribution to $H$ by 
Lemma~\ref{normlimit}.  The second term converges to $0$ almost surely due to
\[
\sqrt \mu \left|\int_0^{\theta^\mu} e^{-a s} R_{\grs_s}(x_s)\dd s\right| \leq \frac{\sqrt \grm}{a}  c\ 2^{-1} \grm^{-2\grg}\to 0.
\]
Since $f_{\grs_s}(x_s)$ and $\grs_s$ are bounded, the third term on the right-hand side of~\eqref{eq:H-mu} converges to~$0$ almost surely, and the fourth term converges to zero in probability by Lemma \ref{Jconv}. Since the distribution of $H$ has no atom at $0$, the only point of discontinuity of functions
$x\mapsto \log|x|$ and $x\mapsto \sgn x$, the lemma follows now from \eqref{eq:expression-for-theta-mu} and
\eqref{eq:exit-sign}.
\end{proof}

Now, we consider the exit time from the fixed interval $[-r,r]$. We define $\grh_t=x_{\theta^\mu+t}$, $\grs'_t=\grs_{\theta^\mu+t}$ and the exit time
\[
\nu(r)=\inf\{t\geq 0: |\grh_t|\geq r\}.
\]
Recalling~\eqref{eq:F}, we define $(S^t)_{t\ge 0}$ as the flow 
associated with the ODE \[\dot z = F(z).\] Note that $(S^t)$ preserves the sign of the initial condition. For $\grd\not=0$ we introduce $t(\grd,r)$ to be the only solution of $|S^t\grd |= r$.

\begin{lem} For any $r\not= 0$,
\[
\lim_{\grd\to 0} \left( t(\grd,r)- \frac{1}{a}\log\frac{r}{\grd} \right)=K(r),
\]
where $\grd$ approaches zero from the right if $r>0$, and from the left if $r<0$,
and $K(r)$ is given in~\eqref{eq:K}. 
\label{dettime}
\end{lem}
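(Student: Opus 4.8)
The plan is to note first that this statement concerns only the deterministic flow $(S^t)$ and carries no randomness, so it reduces to a change-of-variables computation for a scalar ODE. Fix $r>0$ (the case $r<0$ is symmetric and handled at the end) and $\grd\in(0,r)$, and set $z(t)=S^t\grd$, so $\dot z=F(z)$ and $z(0)=\grd$. I would first record that $t(\grd,r)$ is well defined: since $\sgn(F(x))=\sgn(x)$, the function $F$ is continuous and strictly positive on the compact interval $[\grd,r]$, hence bounded below there by some $m>0$, so $\dot z\ge m$ as long as $z\in[\grd,r]$; thus $t\mapsto z(t)$ is strictly increasing, reaches $r$ in finite time, and $t(\grd,r)$ is the unique time with $|S^t\grd|=r$. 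On $[0,t(\grd,r)]$ the map $t\mapsto z(t)$ is a $\cC^1$-diffeomorphism onto $[\grd,r]$ (its derivative $F(z)$ does not vanish there), so separating variables gives the basic identity
\[
t(\grd,r)=\int_0^{t(\grd,r)}\dd t=\int_\grd^r\frac{\dd z}{F(z)}.
\]

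Next I would write the logarithmic term as $\frac1a\log\frac r\grd=\int_\grd^r\frac{\dd z}{az}$ and subtract, so that the claim becomes the assertion that, as $\grd\to 0^+$,
\[
t(\grd,r)-\frac1a\log\frac r\grd=\int_\grd^r\left(\frac{1}{F(z)}-\frac{1}{az}\right)\dd z\ \longrightarrow\ \int_0^r\left(\frac{1}{F(z)}-\frac{1}{az}\right)\dd z=K(r),
\]
with $K(r)$ as in~\eqref{eq:K}. The only point to verify is that the integrand is integrable near the origin, which simultaneously confirms that $K(r)$ is a finite number.

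For that I would use the quadratic Taylor estimate at $0$: averaging the expansions~\eqref{eq:Taylor} of $f_{1}$ and $f_{-1}$ and using $F(0)=0$, $F'(0)=a$, write $F(z)=az+R_F(z)$ with $|R_F(z)|\le\frac c2 z^2$, where $c$ is as in~\eqref{eq:c}. Then for $|z|\le a/c$ one gets $|F(z)|\ge a|z|-\frac c2 z^2\ge\frac{a|z|}{2}$, hence
\[
\left|\frac{1}{F(z)}-\frac{1}{az}\right|=\frac{|R_F(z)|}{a|z|\,|F(z)|}\le\frac{(c/2)z^2}{a|z|\cdot(a|z|/2)}=\frac{c}{a^2},
\]
so the integrand is bounded in a neighborhood of $0$ and continuous away from $0$, hence integrable on $(0,r]$. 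Writing $\int_\grd^r=\int_0^r-\int_0^\grd$ and using $\big|\int_0^\grd\big(\tfrac{1}{F(z)}-\tfrac{1}{az}\big)\dd z\big|\le\frac{c}{a^2}\grd\to 0$ then yields the stated limit.

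Finally I would note that the case $r<0$ goes through verbatim with $\grd\to 0^-$: the flow $(S^t)$ keeps the trajectory in $(-\infty,0)$, where $F<0$ and the trajectory moves monotonically to $r$, the change of variables again gives $t(\grd,r)=\int_\grd^r\dd z/F(z)$, and $\frac1a\log\frac r\grd=\int_\grd^r\frac{\dd z}{az}$ still holds since $r/\grd>1$. I do not expect a genuine obstacle here; the only steps requiring a little care are the well-posedness of $t(\grd,r)$ (handled by the strict sign of $F$ away from the origin, which forces strict monotonicity of the arc) and the integrability at $0$ of $\frac{1}{F(z)}-\frac{1}{az}$ (handled by the Taylor bound above).
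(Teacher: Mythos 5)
Your proof is correct and follows the same approach as the paper's: separation of variables to write $t(\grd,r)=\int_\grd^r \dd z/F(z)$, split off $\frac1a\log(r/\grd)=\int_\grd^r \dd z/(az)$, and pass to the limit. The paper's own proof is terser, simply invoking $F'(0)=a$ to justify the limit; you have filled in the integrability of $\frac{1}{F(z)}-\frac{1}{az}$ near $0$ via the Taylor bound $|F(z)-az|\le\frac c2 z^2$, which is precisely the justification the paper leaves implicit.
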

\begin{proof}
By separation of variables,
\begin{align*}
t(\grd, r)&=\int_\grd^r \frac{\dd x}{F(x)}=\int_\grd^r \left(\frac{1}{F(x)}-\frac{1}{ax}\right) \dd x + \int_\grd^r\frac{1}{ax} \dd x\\
&=\int_\grd^r \left(\frac{1}{F(x)}-\frac{1}{ax}\right) \dd x+\frac{1}{a}\log \frac{r}{\grd}.
\end{align*}
Letting $\grd\to 0$ and using $F'(0)=a$, we complete the proof.
\end{proof}

\begin{lem}
There is $r_0>0$ such that for any $r\in (0,r_0)$
\[
\sup_{0\leq t\leq t(\grh_0, r\cdot \sgn(\grh_0))} |\grh_t-S^t \grh_0|\overset{P}{\to} 0 \quad \textrm{as } \mu\to\infty.
\]
\label{supr}
\end{lem}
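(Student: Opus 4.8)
The plan is to split the switching drift into its average plus a fast-oscillating correction, to bound the oscillation accumulated over the (logarithmically long) time horizon, and to feed that bound into a Gr\"onwall estimate for the difference from the averaged flow. Concretely, write $G=\tfrac12(f_1-f_{-1})$, so that $f_\grs=F+\grs G$ for $\grs\in\{1,-1\}$; by the standing assumptions $G$ is bounded and $\cC^2$ with bounded derivative, and $G(0)=f_1(0)\neq0$. Put $y_t=\grh_t-S^t\grh_0$ and $T=t(\grh_0,\,r\cdot\sgn\grh_0)$. Since $\dot\grh_t=f_{\grs'_t}(\grh_t)=F(\grh_t)+\grs'_t\,G(\grh_t)$ while $\tfrac{\dd}{\dd t}S^t\grh_0=F(S^t\grh_0)$, one has the pathwise identity
\[
y_t=\int_0^t\bigl(F(\grh_s)-F(S^s\grh_0)\bigr)\dd s+E_t,\qquad E_t:=\int_0^t\grs'_s\,G(\grh_s)\dd s .
\]
The two steps are then: (i) show that $\sup_{u\le T}|E_u|$ is negligible; (ii) upgrade this to $\sup_{u\le T}|y_u|\overset{P}{\to}0$ via Gr\"onwall, using a suitably small $r_0$, together with a bootstrap that keeps $\grh$ inside $[-r_0,r_0]$.

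For (i) I would change variables by $u=\theta^\mu+s$ and use the martingale $Z_t=\grs_t+2\mu\int_0^t\grs_s\dd s$ of \eqref{eq:martinagles}; since $\grs_u\,\dd u=\tfrac1{2\mu}(\dd Z_u-\dd\grs_u)$,
\[
E_t=\int_{\theta^\mu}^{\theta^\mu+t}\grs_u\,G(x_u)\dd u=\frac1{2\mu}\int_{\theta^\mu}^{\theta^\mu+t}G(x_u)\dd Z_u-\frac1{2\mu}\int_{\theta^\mu}^{\theta^\mu+t}G(x_u)\dd\grs_u .
\]
The first term is a martingale in $t$ whose predictable quadratic variation is at most $\tfrac{\sup|G|^2}{\mu}\,t$ (recall $[Z]_s=4B(s)$ with $B$ rate-$\mu$ Poisson, and $B(\theta^\mu+\cdot)-B(\theta^\mu)$ is again rate-$\mu$ Poisson by the strong Markov property), so Doob's inequality and Chebyshev make it $O_P(\sqrt{t/\mu})$ uniformly on $[0,t]$. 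The second term, by integration by parts (using that $G(x_u)$ is continuous of finite variation while $\grs$ is pure jump), equals $G(x_{\theta^\mu+t})\grs_{\theta^\mu+t}-G(x_{\theta^\mu})\grs_{\theta^\mu}-\int_{\theta^\mu}^{\theta^\mu+t}\grs_u\,G'(x_u)f_{\grs_u}(x_u)\dd u$, of absolute value $\le 2\sup|G|+t\,\sup|G'|\sup|f|$, hence it is $O(t/\mu)$. Finally, from the separation-of-variables identity used in the proof of Lemma~\ref{dettime}, together with $|\grh_0|=\mu^{-\grg}$ and the boundedness of $\tfrac1{F(x)}-\tfrac1{ax}$ near $0$, we get the deterministic bound $T\le\tfrac{\grg}{a}\log\mu+C(r)$ for all large $\mu$. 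Combining, $\sup_{u\le T}|E_u|=O_P\bigl(\sqrt{\log\mu/\mu}\bigr)$.

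For (ii), set $\rho^\mu=\inf\{t:|\grh_t|\ge r_0\}$. On $[0,T\wedge\rho^\mu]$ both $\grh_s$ and $S^s\grh_0$ lie in $[-r_0,r_0]$, so $|F(\grh_s)-F(S^s\grh_0)|\le L|y_s|$ with $L=\sup_{[-r_0,r_0]}|F'|$, and Gr\"onwall applied to the identity above yields $\sup_{u\le T\wedge\rho^\mu}|y_u|\le e^{LT}\sup_{u\le T}|E_u|=:\Xi^\mu$. Since $e^{LT}\le C'(r)\mu^{L\grg/a}$ and $L\to a$ as $r_0\to0$ (because $F'$ is continuous with $F'(0)=a$), and since $\grg<\tfrac12$, I would fix $r_0>0$ once and for all so small that $L\grg/a<\tfrac12$; then $\Xi^\mu=O_P\bigl(\mu^{L\grg/a-1/2}\sqrt{\log\mu}\bigr)\overset{P}{\to}0$. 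To remove the stopping at $\rho^\mu$, observe that on the event $\{\Xi^\mu<r_0-r\}$ — of probability tending to $1$, using $r<r_0$ — one cannot have $\rho^\mu\le T$: otherwise $|\grh_{\rho^\mu}|=r_0$ by continuity, while $|\grh_{\rho^\mu}|\le|S^{\rho^\mu}\grh_0|+\Xi^\mu<r+(r_0-r)=r_0$ (using that $|S^s\grh_0|$ increases from $\mu^{-\grg}$ to $r$ on $[0,T]$), a contradiction. Hence $\rho^\mu>T$ on that event, so $\sup_{0\le t\le T}|\grh_t-S^t\grh_0|\le\Xi^\mu$ there, which gives the asserted convergence in probability.

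The main obstacle is the tension inherent in (ii): the horizon $T$ grows like $\tfrac{\grg}{a}\log\mu$, so linearizing $F$ (with $F'\approx a$ near $0$) amplifies errors by a factor $\sim\mu^{\grg}$, and this is only barely beaten by the $\mu^{-1/2}$ size of the fast-switching fluctuation $E_t$. This is exactly what forces the two refinements above: the integration-by-parts bookkeeping, needed to obtain a genuine $O(\mu^{-1/2})$ (rather than merely $O(1)$) bound on $\sup_{u\le T}|E_u|$, and the choice of $r_0$ small enough that the Gr\"onwall rate $L$ is close enough to $a$ to keep $L\grg/a<\tfrac12$. Confining the trajectory to $[-r_0,r_0]$ via the $\rho^\mu$ bootstrap is then routine.
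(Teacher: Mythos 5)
Your argument is correct and follows essentially the same route as the paper: decompose the fast-oscillation term $\int_0^t \grs'_s G(\grh_s)\,\dd s$ into a martingale part plus a bounded-variation remainder, bound the martingale's running supremum by Doob/BDG and Chebyshev, use the Lemma~\ref{dettime} identity to bound the horizon by $T\le\frac{\grg}{a}\log\mu+C(r)$, then close with Gr\"onwall and a choice of $r_0$ small enough that $\grg L(r_0)/a<1/2$, plus the bootstrap keeping $\grh$ inside $[-r_0,r_0]$. The only real difference is cosmetic: the paper produces the martingale by applying the generator to $g(\grs,x)=f_{-\grs}(x)$ (the Dynkin martingale $Z'$), whereas you substitute $\grs_u\,\dd u=\tfrac1{2\mu}(\dd Z_u-\dd\grs_u)$ and integrate the $\dd\grs$ piece by parts. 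Since $f_{-\grs}=F-\grs G$, both constructions give the same pure-jump martingale with jumps $\pm 2G(\grh_s)$ at switching times, and the resulting estimates coincide.
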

\begin{proof}
Choosing $g(\grs,x)=f_{-\grs}(x)$, we obtain for $|x|<R$
\[
\cL g(\grs,x)=f_\grs(x)f'_{-\grs}(x)+2\mu \grs G(x),
\]
where $G(x)=\frac{1}{2}(f_1(x)-f_{-1}(x))>0$.
By Proposition 1.7 in \cite[Chapter 4]{Ethier-Kurtz:MR838085}, the process $Z'(t)=-g(\grs'_t,\grh_t)+\int_{0}^t \cL g(\grs'_s,\grh_s)\dd s$ is a martingale.

We will only prove
\begin{equation}
\label{eq:following determenistic closely}
\left(\sup_{0\leq t\leq t(\grh_0, r\cdot\sgn(\grh_0))} |\grh_t-S^t \grh_0|\right) \1_{\{\sgn(\grh_0)=\alpha\}}\overset{P}{\to} 0,\quad \mu\to 0,
\end{equation}
for $\alpha=1$. The case of $\alpha=-1$ is treated similarly. 

The rest of the proof follows closely the proof of \cite[Lemma 3.6]{Bakhtin:MR2935120}. We define $\grD(t)=\grh_t-S^t\grh_0$. Since for $t\geq 0$,
\begin{align*}
\grh_t &=\grh_0+\int_{0}^t F(\grh_s) \dd s + \int_{0}^t \grs_s' G(\grh_s) \dd s, \\
S^t\grh_0 &= \grh_0+\int_{0}^t F(S^t\grh_0) \dd s,
\end{align*}
we have
\[
\grD(t)=\int_{0}^t (F(\grh_s) - F(S^t\grh_0)) \dd s + \int_{0}^t \grs_s' G(\grh_s) \dd s.
\]
Using the Lipschitz constant $L(r)$ of $F$ on $[-r,r]$, we obtain on the event $\{\sgn(\grh_0)=1\}$ for any $t\leq t(\grh_0, r)=t(\mu^{-\grg}, r)$
\begin{equation}
|\grD(t\wedge\nu(r))|\leq L(r) \int_0^{t\wedge\nu(r)}|\grD(s)| \dd s + \left|\int_{0}^{t\wedge \grn(r)} \grs_s' G(\grh_s) \dd s\right|.
\label{eq:before Gronwall}
\end{equation}
Note that by the definition of $Z'$, we have $2\mu \int_{0}^t \grs_s' G(\grh_s) \dd s=Z'(t)+f_{-\grs'_t}(\grh_t)-\int_0^t f_{\grs'_s}(\grh_s)f'_{-\grs'_s}(\grh_s) \dd s$. Defining $A_1(r)=\sup_{\grs\in\{1,-1\}, z\in[-r,r]} |f_{\grs}(z)|$ and $A_2(r)=\sup_{\grs\in\{1,-1\}, z\in[-r,r]} |f_{\grs}(z) f'_{-\grs}(z)| $, we can bound
\begin{align*}
2\mu \left|\int_{0}^{t\wedge \grn(r)} \grs_s' G(\grh_s) \dd s\right| \leq
\sup_{s\leq t(|\grh_0|, r)\wedge \grn(r)} |Z'(s)|+ A_1(r)+ A_2(r)(t(\grh_0, r)\wedge \grn(r))
\end{align*}
for any $t\in [0, t(\grh_0, r)]$ on the event $\{\sgn(\grh_0)=1\}$. We claim that if $\grd<1/2$, then
\[P\left((2\mu)^{-1} \sup_{s\leq t(\grh_0, r)\wedge \grn(r)} |Z'(s)| > \mu^{-\grd}, \sgn(\grh_0)=1\right)\to 0,\quad \mu \to \infty.\]
The quadratic variation process $[Z']_t\leq 4A_1^2(r) (B(t+\grt)- B(\grt))$ is stochastically dominated by $4A_1^2(r)$ times a rate $\mu$ Poisson process. Using the Chebyshev inequality followed by Burkholder--Davis--Gundy inequality \cite[Thm. 92, Chap. 7]{DM82:MR745449}, we have with some $ \tilde C>0$
\begin{align*}
P\left( \sup_{s\leq t(\grh_0, r)\wedge \grn(r)} |Z'(s)| > 2\mu \mu^{-\grd}, \sgn(\grh_0)=1 \right) 
&\leq \tilde C \mu^{2(\grd-1)}  \EEE\left[[Z']_{t(\mu^{-\grg}, r)}\right] \\
&\leq \tilde C\mu^{2(\grd-1)}  4A_1^2(r)\, \EEE[B(t(\mu^{-\grg}, r)+\grt)- B(\grt)]\\
&= \tilde C\mu^{2(\grd-1)} \ 4A_1^2(r)\, \mu\, t(\mu^{-\grg}, r)
\end{align*}
which converges to zero by Lemma \ref{dettime} as $\mu\to\infty$, whenever $\grd<1/2$. We conclude that  for any $r>0$
\begin{align*}
P\left(\left|\int_{0}^{ t(\grh_0, r)\wedge \grn(r)} \grs_s' G(\grh_s) \dd s\right| > \mu^{-\grd}, \sgn(\grh_0)=1\right) \to 0.
\end{align*}
For $r_0>0$, on the event
\[
\left\{
\left|\int_{0}^{ t(\grh_0, r_0)\wedge \grn(r_0)} \grs_s' G(\grh_s) \dd s\right| \leq \mu^{-\grd}, \sgn(\grh_0)=1
\right\},
\]
Gronwall's inequality applied to \eqref{eq:before Gronwall} implies that there is 
$C>0$ such that for $t\leq t(\grh_0,r_0)$,
\begin{align}
|\grD(t\wedge\nu(r_0))|\leq e^{L(r_0) t(\grh_0,r_0)} \mu^{-\grd}\leq C\mu^{\grg L(r_0)/a}\mu^{-\grd}.
\label{gronwall}
\end{align}
Since $F'(0)=a$, we can choose $r_0$ so small such that $\grg L(r_0)/a<1/2$. Consequently, for some $\grd<1/2$, we have $\grr=\grd-\grg L(r_0)/a>0$ so that the right hand side in the bound above converges to 0. For any $r \in (0,r_0)$, we conclude using (\ref{gronwall}) at $t=\nu(r_0)$ that $P(\grn(r_0) < t(\grh_0, r), \sgn(\grh_0)=1) \to 0$. Now, using (\ref{gronwall}) at 
$s\leq t(\grh_0, r)<t(\mu^{-\grg}, r_0)$,  we have
\[
P\left(\grn(r_0) \geq t(\grh_0, r), \sup_{s\leq t(\grh_0, r)} |\grD(s)| > C \mu^{-\grr},  \sgn(\grh_0)=1  \right)\to 0,
\]
which implies \eqref{eq:following determenistic closely} for $\alpha=1$.
\end{proof}

Combining Lemmas \ref{exitth}, \ref{dettime} and \ref{supr}, we conclude

\begin{lem} For any $r\in(0, r_0)$
\[
\left( x_{\grt(r)}, \grt(r)- \frac{1}{2a}\log\mu  \right) \overset{d}{\to} \left(r\cdot \mathrm{sgn} (H), - \frac{1}{a}\log |H| + \frac{\log r}{a}+ K(r \cdot \sgn H)\right) \quad \mathrm{as}\ \mu\to\infty.
\]
\label{exitr}
\end{lem}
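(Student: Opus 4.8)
The plan is to glue together the behavior on $[0,\theta^\mu]$ (controlled by Lemma~\ref{exitth}) with the behavior on $[\theta^\mu,\tau^\mu(r)]$ (controlled by Lemmas~\ref{dettime} and~\ref{supr}) and to verify that the random shift appearing in both pieces is the same random variable $H$. First I would decompose
\[
\grt^\mu(r)=\grth^\mu+\grn(r),
\]
which is valid for $\mu$ large enough that $\mu^{-\grg}<r$, since then $x$ has not yet left $[-r,r]$ at time $\grth^\mu$, and $x_{\grt^\mu(r)}=\grh_{\grn(r)}$. For the first summand, Lemma~\ref{exitth} gives the joint convergence
\[
\left(\sgn(x_{\grth^\mu}),\ \grth^\mu-\tfrac{1/2-\grg}{a}\log\mu\right)\overset{d}{\to}\left(\sgn H,\ -\tfrac1a\log|H|\right),
\]
with $\mathrm{Law}(H)=\cN(0,f_1(0)^2(2a)^{-1})$. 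For the second summand I would show that, on the event $\{\sgn(\grh_0)=\alpha\}$ for $\alpha=\pm1$, the random exit time $\grn(r)$ is asymptotically the deterministic time $t(\grh_0,r\cdot\alpha)$ it takes the $F$-flow to travel from $\grh_0=x_{\grth^\mu}$ (which has $|\grh_0|=\mu^{-\grg}$) to the boundary $\{r,-r\}$.

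The key reduction is the following: by Lemma~\ref{supr}, for $r\in(0,r_0)$,
\[
\sup_{0\le t\le t(\grh_0,\,r\cdot\sgn\grh_0)}|\grh_t-S^t\grh_0|\overset{P}{\to}0,
\]
and since $S^t\grh_0$ crosses the level $r\cdot\sgn\grh_0$ transversally with nonzero speed $F(r\cdot\sgn\grh_0)\neq0$ at the single time $t(\grh_0,r\cdot\sgn\grh_0)$, a standard continuity argument (the exit time of a path uniformly close to a transversally exiting path is close to the exit time of that path) upgrades the $C^0$-closeness to
\[
\grn(r)-t(\grh_0,\,r\cdot\sgn\grh_0)\overset{P}{\to}0,\qquad \grh_{\grn(r)}-r\cdot\sgn\grh_0\overset{P}{\to}0,
\]
where the latter forces $x_{\grt^\mu(r)}=\grh_{\grn(r)}\to r\cdot\sgn\grh_0$ in probability; but $x_{\grt^\mu(r)}$ takes values in the two-point set $\{r,-r\}$, so in fact $x_{\grt^\mu(r)}=r\cdot\sgn(x_{\grth^\mu})$ with probability tending to $1$. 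Next, by Lemma~\ref{dettime} with $\grd=\grh_0$, $|\grh_0|=\mu^{-\grg}$,
\[
t(\grh_0,\,r\cdot\sgn\grh_0)=\tfrac1a\log\frac{r}{\mu^{-\grg}}+K(r\cdot\sgn\grh_0)+o(1)
=\tfrac\grg a\log\mu+\tfrac{\log r}{a}+K(r\cdot\sgn\grh_0)+o(1),
\]
uniformly in the (finitely many) values of $\sgn\grh_0$. Combining, $\grn(r)-\tfrac\grg a\log\mu\overset{P}{\to}$ has the same limiting behavior as $\tfrac{\log r}{a}+K(r\cdot\sgn(x_{\grth^\mu}))$ with an $o_P(1)$ error.

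Finally I would add the two pieces. Writing
\[
\grt^\mu(r)-\tfrac1{2a}\log\mu=\left(\grth^\mu-\tfrac{1/2-\grg}{a}\log\mu\right)+\left(\grn(r)-\tfrac\grg a\log\mu\right),
\]
the $\log\mu$ coefficients satisfy $\tfrac{1/2-\grg}{a}+\tfrac\grg a=\tfrac1{2a}$, so they cancel exactly. The first bracket converges jointly with $\sgn(x_{\grth^\mu})$ to $(-\tfrac1a\log|H|,\sgn H)$ by Lemma~\ref{exitth}; the second bracket equals $\tfrac{\log r}{a}+K(r\cdot\sgn(x_{\grth^\mu}))+o_P(1)$, a measurable (continuous on each sign-atom) function of the already-convergent pair. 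Since $\mathrm{Law}(H)$ has no atom at $0$, the map $h\mapsto(-\tfrac1a\log|h|+\tfrac{\log r}{a}+K(r\cdot\sgn h),\ r\cdot\sgn h)$ is $\mathrm{Law}(H)$-a.s.\ continuous, so the continuous mapping theorem (in the form for converging pairs with an $o_P(1)$ perturbation) yields
\[
\left(x_{\grt(r)},\ \grt(r)-\tfrac1{2a}\log\mu\right)\overset{d}{\to}\left(r\cdot\sgn H,\ -\tfrac1a\log|H|+\tfrac{\log r}{a}+K(r\cdot\sgn H)\right),
\]
as claimed. The main obstacle is the middle step: promoting the uniform pathwise closeness of Lemma~\ref{supr} to convergence of the exit time $\grn(r)$ and exit location, which requires the transversality of the $F$-flow at the boundary (guaranteed by $\sgn F(x)=\sgn x$ and $F\in\cC^1$, so $F(r\cdot\alpha)\neq0$) and care that the convergence is joint with, not merely marginal to, the convergence of $\sgn(x_{\grth^\mu})$ and $\grth^\mu$ — this is why one works separately on the events $\{\sgn\grh_0=\pm1\}$ and only at the end reassembles.
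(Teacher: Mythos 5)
Your proof is correct and takes essentially the same approach the paper intends: the paper gives no details beyond ``Combining Lemmas \ref{exitth}, \ref{dettime} and \ref{supr}, we conclude,'' and your argument supplies exactly the missing glue --- the decomposition $\grt^\mu(r)=\grth^\mu+\grn(r)$, the transversality step converting the $C^0$-closeness of Lemma~\ref{supr} into convergence of $\grn(r)$ and the exit location, and the continuous mapping / Slutsky step applied to the converging pair from Lemma~\ref{exitth}. One small point you gloss over with ``standard continuity argument'': since Lemma~\ref{supr} controls $\grh$ only up to time $t(\grh_0,r\cdot\sgn\grh_0)$, where the flow is exactly at the boundary, one should apply Lemma~\ref{supr} with a slightly larger radius $r'\in(r,r_0)$ to guarantee the flow actually overshoots $\{r,-r\}$ within the controlled window, and only then invoke transversality; this is a routine refinement and does not affect the correctness of your argument.
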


Now let us consider the exit time from an arbitrary interval.

\begin{lem}
For $r\in(0, r_0)$, define $\grh_t=x_{\grt(r)+t}$. Then, for any $T>0$,
\[
\sup_{0\leq t \leq T} |\grh_t- S^t \grh_0| \overset{P}{\to} 0 \quad \mathrm{as}\ \mu\to\infty.
\]
\label{supR}
\end{lem}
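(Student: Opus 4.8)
The plan is to run essentially the same argument as in the proof of Lemma~\ref{supr}, now exploiting the fact that the time horizon $T$ is a fixed constant rather than a quantity growing logarithmically in $\mu$. First I would apply the strong Markov property of $(\grs_t,x_t)$ at the stopping time $\grt(r)$: the shifted pair $(\grs'_t,\grh_t)=(\grs_{\grt(r)+t},x_{\grt(r)+t})$ is again a piecewise deterministic Markov process with generator~\eqref{generator}, started from $(\grs_{\grt(r)},\grh_0)$, where $|\grh_0|=r$ by continuity of the paths. Writing $f_\grs=F+\grs G$ with $G=\tfrac12(f_1-f_{-1})$ as before and subtracting $S^t\grh_0=\grh_0+\int_0^t F(S^s\grh_0)\,\dd s$ from $\grh_t=\grh_0+\int_0^t F(\grh_s)\,\dd s+\int_0^t\grs'_s G(\grh_s)\,\dd s$, one obtains for $\grD(t)=\grh_t-S^t\grh_0$ the identity $\grD(t)=\int_0^t\bigl(F(\grh_s)-F(S^s\grh_0)\bigr)\,\dd s+\int_0^t\grs'_s G(\grh_s)\,\dd s$. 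Since the $f_{\pm1}$ are bounded, both $|\grh_t|$ and $|S^t\grh_0|$ stay inside a fixed compact interval $[-M,M]$ for $t\in[0,T]$, so, with $L$ the Lipschitz constant of $F$ on $[-M,M]$, one gets the Gronwall-type bound $|\grD(t)|\le L\int_0^t|\grD(s)|\,\dd s+\sup_{0\le u\le T}\bigl|\int_0^u\grs'_s G(\grh_s)\,\dd s\bigr|$ for $t\in[0,T]$.

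Second, I would control the fluctuation term $\int_0^t\grs'_s G(\grh_s)\,\dd s$ by the martingale device already used in Lemma~\ref{supr}: with $g(\grs,x)=f_{-\grs}(x)$ one has $\cL g(\grs,x)=f_\grs(x)f'_{-\grs}(x)+2\mu\grs G(x)$, so $Z'(t)=-f_{-\grs'_t}(\grh_t)+\int_0^t\cL g(\grs'_s,\grh_s)\,\dd s$ is a martingale, and solving this identity for $2\mu\int_0^t\grs'_s G(\grh_s)\,\dd s$ expresses that integral as $Z'(t)$ plus two quantities bounded uniformly in $t\in[0,T]$. The quadratic variation $[Z']_t$ is stochastically dominated by a constant multiple of the shifted jump count $B(\grt(r)+t)-B(\grt(r))$, which is again a rate-$\mu$ Poisson process by the strong Markov property, so $\EEE[[Z']_T]$ is of order $\mu$, and Burkholder--Davis--Gundy together with Chebyshev give $P\bigl(\sup_{0\le s\le T}|Z'(s)|>\mu\eps\bigr)\to0$ for every fixed $\eps>0$. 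Dividing by $2\mu$, all three contributions tend to zero in probability, hence $\sup_{0\le u\le T}\bigl|\int_0^u\grs'_s G(\grh_s)\,\dd s\bigr|\overset{P}{\to}0$ as $\mu\to\infty$.

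Finally, fixing $\eps>0$ and working on the event where this supremum is at most $\eps$ --- an event of probability tending to $1$ --- Gronwall's inequality yields $\sup_{0\le t\le T}|\grD(t)|\le\eps\,e^{LT}$, and since $\eps$ is arbitrary this gives $\sup_{0\le t\le T}|\grh_t-S^t\grh_0|\overset{P}{\to}0$.

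I do not expect a serious obstacle here; this statement is genuinely easier than Lemma~\ref{supr}, because over a fixed horizon the Gronwall factor $e^{LT}$ is a harmless constant (so there is no need to take $r_0$ small to beat a $\mu^{\grg L/a}$ blow-up), and, the time horizon being independent of the exit direction, no splitting according to $\sgn(\grh_0)$ is required. The only points that need a little care are applying the strong Markov property so that the shifted jump-counting process is still Poisson at rate $\mu$, and noting that boundedness of the $f_{\pm1}$ confines both $\grh$ and the deterministic flow to a fixed compact set over $[0,T]$, so that the Lipschitz constant of $F$ is well-defined and uniform in $\mu$.
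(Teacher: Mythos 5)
Your proposal is correct and takes essentially the same approach as the paper, whose proof of this lemma is simply the remark that the argument of Lemma~\ref{supr} goes through with a fixed time horizon $T$, which is exactly what you carry out in detail. Your observations about why it is easier --- the Gronwall factor $e^{LT}$ is a fixed constant so no restriction on $r_0$ is needed, and no splitting by $\sgn(\grh_0)$ is required --- match the paper's intent.
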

The proof follows the same steps as the proof of Lemma \ref{supr}, however easier as the time horizon is fixed this time. Our main result follows from Lemmas \ref{exitr} and \ref{supR}.

\bibliographystyle{alpha}
\bibliography{}

\end{document}